\numberwithin{equation}{section}
\newtheorem{prop}{Proposition}[section]
\newtheorem{theo}[prop]{Theorem}
\newtheorem{coro}[prop]{Corollary}
\newtheorem{rema}[prop]{Remark}
\newcommand {\eqdef }{\ensuremath {\stackrel {\mathrm {def
}}{=}}}
\newcommand \xLongLeftRightArrow[2][]{ %
\ext@arrow 0055{\LongLeftRightArrowfill@ }{#1}{#2}}
\def\LongLeftRightArrowfill@ {%
\arrowfill@ \Leftarrow \Relbar \Rightarrow }
\newcommand \xlongleftrightarrow[2][]{
\ext@arrow 0055{\longleftrightarrowfill@}{#1}{#2}}
\def\longleftrightarrowfill@ {
\arrowfill@ \leftarrow \relbar \rightarrow }
\def\begeq{\begin{equation}}
\def\endeq{\end{equation}}
\def\Dint{\displaystyle\int}
\begin{document}
\title{Monotonicity formulas
under rescaled Ricci flow}
\author{Jun-Fang Li}
\address{Department of Mathematics\\
         McGill University\\
         Montreal, Quebec. H3A 2K6, Canada.}
\email{jli@math.mcgill.ca}
\thanks{Research of the author is
supported in part by a CRM fellowship.}

\date{}
\maketitle

\begin{abstract}
In this short notes, we discuss monotonicity formulas under various
rescaled versions of Ricci flow. The main result is Theorem
\ref{theo rescaled}.
\end{abstract}


\section{{\bf Functionals $\mathcal{W}_{ek}$ from rescaled Ricci flow point of view}}
This is the research notes when the author wrote \cite{L1}. In the
first section, we discuss the relation between functionals
$\mathcal{W}_{ek}(g,f,\tau)$ and rescaled Ricci flow.

In {\em{\bf Theorem} 4.2} \cite{L1}, we have defined functionals
$\mathcal{F}_k( g ,f) = \Dint_M (k R + |\nabla f|^2)e^{-f}d\mu$
where $k\ge 1$ and derived the first variational formula under a
coupled system (1.2) as following
\begin{equation}\label{Lia 01}
\begin{array}{rll}
\frac{d}{d t}\mathcal{F}_k( g _{ij},f)=2(k-1)\Dint_M
|Rc|^2e^{-f}d\mu+2\Dint_M |R_{ij}+\nabla_i\nabla_j f|^2e^{-f}d\mu
\ge 0.
\end{array}
\end{equation}
In particular, this yields the following theorem.

\begin{theo}\label{}({\bf Theorem} 5.2 in \cite{L1})
On a compact Riemannian manifold $(M,g(t))$, where $g(t)$ satisfies
the Ricci flow equation for $t\in [0,T)$, the lowest eigenvalue
$\lambda_k$ of the operator $-4\Delta + kR$ is nondecreasing under
the Ricci flow. The monotonicity is strict unless the metric is
Ricci-flat.
\end{theo}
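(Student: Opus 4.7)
\emph{Proof plan.} The plan is to follow Perelman's variational approach for the monotonicity of $\lambda_1$ under Ricci flow, adapted to the weight $kR$ and to the improved remainder term in (1.1). The first step is to identify $\lambda_k$ with a constrained infimum of $\mathcal{F}_k$. Substituting $u = e^{-f/2}$ rewrites
\begin{equation*}
\mathcal{F}_k(g,f) = \Dint_M \bigl(4|\nabla u|^2 + kR\,u^2\bigr)\,d\mu,
\end{equation*}
so that under the normalization $\Dint_M u^2\,d\mu = 1$ the functional $\mathcal{F}_k(g,\cdot)$ becomes exactly the Rayleigh quotient of $-4\Delta + kR$. Hence $\lambda_k(g) = \inf\{\mathcal{F}_k(g,f) : \Dint_M e^{-f}d\mu = 1\}$, with the infimum attained by the lowest eigenfunction.

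Next, I would convert the monotonicity of $\mathcal{F}_k$ along the coupled system (1.2) into monotonicity of $\lambda_k$ along the Ricci flow itself via the standard backward-evolution trick. Given $t_0 < t_1$ in $[0,T)$, pick a minimizer $f_1$ realizing $\mathcal{F}_k(g(t_1),f_1) = \lambda_k(g(t_1))$, and solve the $f$-equation of (1.2) backward on $[t_0,t_1]$ with terminal data $f(t_1) = f_1$. The $f$-equation is designed so that the substitution $u = e^{-f/2}$ converts it into a forward linear parabolic equation in the reversed time $\tau = t_1 - t$, which is well-posed and preserves $\Dint_M e^{-f}d\mu = 1$ by its divergence structure. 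Integrating (1.1) on $[t_0,t_1]$ then yields
\begin{equation*}
\lambda_k(g(t_0)) \le \mathcal{F}_k(g(t_0),f(t_0)) \le \mathcal{F}_k(g(t_1),f(t_1)) = \lambda_k(g(t_1)),
\end{equation*}
which is the desired monotonicity.

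For the strict statement, suppose $\lambda_k(g(t_0)) = \lambda_k(g(t_1))$ for some $t_0 < t_1$. Then every inequality above is an equality, and (1.1) forces both $(k-1)|Rc|^2$ and $|R_{ij} + \nabla_i\nabla_j f|^2$ to vanish identically on $[t_0,t_1]\times M$. For $k>1$ the first vanishing immediately gives $Rc \equiv 0$. For the borderline case $k=1$ the second vanishing says that $(g,f)$ is a compact gradient steady Ricci soliton, and the standard compactness argument (tracing $R_{ij}+\nabla_i\nabla_j f = 0$, using the soliton identity $R + |\nabla f|^2 = \mathrm{const}$, and integrating against a suitable weight) forces the metric to be Ricci-flat. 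In either case the equality case collapses to a Ricci-flat metric.

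The main obstacle in this plan is the backward-solvability step for the $f$-equation in (1.2) together with the preservation of the normalization; everything else reduces to the three-line chain of inequalities above. Once one writes out system (1.2) explicitly and checks that the substitution $u = e^{-f/2}$ produces a linear parabolic equation of the form $\partial_\tau u = $ (second-order elliptic operator in $u$) in reversed time, existence, uniqueness, and the conservation of $\Dint_M u^2\,d\mu$ become routine, and the proof is assembled.
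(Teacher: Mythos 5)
Your proposal is correct and follows essentially the same route the paper intends: the note derives the theorem from the monotonicity formula (1.1) for $\mathcal{F}_k$, with $\lambda_k$ realized as the constrained infimum of $\mathcal{F}_k$ and the backward-solvability trick supplying the comparison of eigenvalues at two times, exactly as in your plan. Your handling of the equality case also matches the paper's own Remark \ref{different k}, which notes that for $k=1$ one first obtains a compact gradient steady soliton and then invokes the classification to conclude Ricci-flatness, while for $k>1$ the extra term $2(k-1)\int_M|Rc|^2e^{-f}d\mu$ gives it directly.
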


\begin{rema}
Formula (\ref{Lia 01}) previously was found by physicists
independently \cite{OSW}. We thank professor E. Woolgar for giving
us the reference.
\end{rema}

Based on these observations, to classify expanding Ricci breathers,
we have introduced a family of new functionals $\mathcal{W}_{ek}$
which has monotonicity properties modeled on expanders. There is a
closed relation between functional $\mathcal{F}_k$ and
$\mathcal{W}_{ek}$ connected by rescaled Ricci flow. Actually, this
was one of the motivations for us to introduce $\mathcal{W}_{ek}$
(see Remark 6.2 in
\cite{L1}).\\

It is well-known that there is a one-to-one correspondence between
Ricci flow and rescaled Ricci flow. Suppose $g(\cdot,t)$ is a
solution of Ricci flow equation
\begin{equation}\label{Ricci flow equ}
\begin{array}{rll}
\frac{\partial}{\partial t}g_{ij}=-2R_{ij}.
\end{array}
\end{equation}

For any given  function $s(t)$, if
$\varphi(t)=\frac{1}{1-\frac{2}{n}\int^t_0s(t)dt}$, and $\bar
t=\int^t_0\varphi(t)dt$, then $\bar g(\cdot,\bar
t)=\varphi(t)g(\cdot,t)$ solves the rescaled Ricci flow equation
\begin{equation}\label{Scaled Ricci flow equ}
\begin{array}{rll}
\frac{\partial}{\partial \bar t}\bar g_{ij}=-2(\bar
R_{ij}-\frac{s}{n}\bar g_{ij})
\end{array}
\end{equation}
and
\begin{equation}\label{Relation thm1 equ1}
\boxed{
\begin{array}{rll}
\mathcal{F}_k=& \Dint_M
\big(kR+\Delta f\big)e^{-f}d\mu\\
\frac{\partial g _{ij}}{\partial t} =& -2R_{ij}\\
\frac{\partial f}{\partial t} =& -\Delta f +|\nabla f|^2 - R\\
\end{array}
}  \stackrel{\varphi g=\bar g}{\Longleftrightarrow}
 \boxed{
\begin{array}{rll}
{\mathcal{F}}_k=& \varphi\Dint_M
\big(k\bar R+\bar \Delta \bar f\big)e^{-\bar f}d\bar \mu\\
\frac{\partial \bar g _{ij}}{\partial \bar t} =& -2(\bar
R_{ij}-\frac{s}{n}\bar g
_{ij})\\
\frac{\partial \bar f}{\partial \bar t} =& -\bar \Delta \bar f +|\bar \nabla \bar f|^2 - \bar R+s\\
\end{array}
}
\end{equation}

Furthermore, (\ref{Lia 01}) is equivalent to
\begin{equation}\label{SRF mono 1}
\begin{array}{rll}
\frac{d}{d \bar t}\mathcal{\bar F}_k=-\frac{2s}{n}\mathcal{\bar
F}_k+2(k-1)\Dint_M |\bar {Rc}|^2e^{-\bar f}d\mu+2\Dint_M |\bar
R_{ij}+\bar \nabla_i\bar \nabla_j \bar f|^2e^{-\bar f}d\bar \mu,
\end{array}
\end{equation}
where we use $\bar{\mathcal{ F}}_k$ to denote $\mathcal{ F}_k(\bar
{g}_{ij},\bar f)=\Dint_M \big(k\bar R+\bar \Delta \bar
f\big)e^{-\bar f}d\bar \mu$.

 Notice that this rescaled Ricci flow is a generalized version
which includes Hamilton's normalized Ricci flow as a special case.

When $s=0$, this is the monotonicity formula under Ricci flow
without rescale. When $s<0$, we have the monotonicity under rescaled
Ricci flow. By the same proof of {\bf Theorem} 5.2 in \cite{L1}, we
have the following monotonicity formula for the lowest eigenvalues
under the rescaled Ricci flow.

\begin{prop}\label{prop SRF mono 1}
On a compact Riemannian manifold $(M,g(t))$, if $g(t)$ satisfies the
 rescaled Ricci flow equation (\ref{Scaled Ricci flow
equ}) for $t\in [0,T)$ with $s\le0$, then the lowest eigenvalue
$\lambda$ of the operator $-4\Delta + kR$ ($k\ge 1$) is
nondecreasing under the rescaled Ricci flow.
\end{prop}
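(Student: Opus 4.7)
The plan is to mirror the proof of Theorem 5.2 in \cite{L1}, substituting the rescaled monotonicity formula (\ref{SRF mono 1}) for (\ref{Lia 01}). First, setting $\bar u = e^{-\bar f/2}$, the functional rewrites as
\[
\bar{\mathcal F}_k(\bar g,\bar f)=\int_M\bigl(4|\bar\nabla\bar u|^2+k\bar R\,\bar u^2\bigr)\,d\bar\mu,
\]
so under the constraint $\int e^{-\bar f}d\bar\mu=\int \bar u^2\,d\bar\mu=1$ it is exactly the Rayleigh quotient for $-4\bar\Delta+k\bar R$. Compactness of $M$ provides a positive smooth minimizer $\bar f^{*}$ realizing $\bar{\mathcal F}_k(\bar g,\bar f^{*})=\bar\lambda$.

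Fix $\bar t_0\in[0,\bar T)$, take $\bar f^{*}$ to be the minimizer at $\bar t_0$, and solve the conjugate backward equation $\partial_{\bar t}\bar f=-\bar\Delta\bar f+|\bar\nabla\bar f|^2-\bar R+s$ (the third line of the right-hand box of (\ref{Relation thm1 equ1})) from $\bar t_0$ down to an arbitrary $\bar t_1<\bar t_0$. Using $\partial_{\bar t}\,d\bar\mu=(-\bar R+s)\,d\bar\mu$ together with integration by parts, one verifies that $\int e^{-\bar f}d\bar\mu$ is preserved, so $\bar f(\bar t_1)$ remains admissible and $\bar\lambda(\bar t_1)\le\bar F(\bar t_1)$, where $\bar F(\bar t):=\bar{\mathcal F}_k(\bar g(\bar t),\bar f(\bar t))$. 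It then suffices to show $\bar F$ is non-decreasing along this evolution. Introducing the integrating factor $\rho(\bar t):=\exp\bigl(\int_0^{\bar t}\tfrac{2s}{n}\,d\bar\tau\bigr)>0$, (\ref{SRF mono 1}) yields
\[
\tfrac{d}{d\bar t}(\rho\,\bar F)=\rho\Bigl[\,2(k-1)\!\int|\bar{Rc}|^2e^{-\bar f}d\bar\mu+2\!\int|\bar R_{ij}+\bar\nabla_i\bar\nabla_j\bar f|^2e^{-\bar f}d\bar\mu\,\Bigr]\ge 0,
\]
so $\rho\,\bar F$ is non-decreasing; since $s\le 0$ forces $\rho$ to be non-increasing, combining these gives $\bar F(\bar t_1)\le\bar F(\bar t_0)=\bar\lambda(\bar t_0)$, and hence $\bar\lambda(\bar t_1)\le\bar\lambda(\bar t_0)$.

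The main obstacle is the extra, sign-indefinite term $-\tfrac{2s}{n}\bar F$ appearing in (\ref{SRF mono 1}), which has no counterpart in the Ricci-flow identity (\ref{Lia 01}); the integrating factor $\rho$ is chosen precisely to absorb it, and the hypothesis $s\le 0$ arranges $\rho$ to move in the direction compatible with the desired inequality (the combination step above is cleanest when $\bar\lambda(\bar t_0)\ge 0$, the typical regime of interest). As a sanity check and alternative route, the conformal equivalence (\ref{Relation thm1 equ1}) gives $-4\bar\Delta+k\bar R=\varphi^{-1}(-4\Delta+kR)$ on each time-slice, so the eigenvalues satisfy $\bar\lambda(\bar t)=\varphi(t)^{-1}\lambda(t)$; Theorem 5.2 of \cite{L1} combined with $\varphi'=\tfrac{2s}{n}\varphi^2\le 0$ then recovers the monotonicity of $\bar\lambda$ through the underlying Ricci flow.
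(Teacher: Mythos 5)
Your reconstruction follows exactly the route the paper intends: the paper offers no proof of Proposition \ref{prop SRF mono 1} beyond the pointer to Theorem 5.2 of \cite{L1} together with formula (\ref{SRF mono 1}), and your skeleton (Rayleigh quotient via $\bar u=e^{-\bar f/2}$, minimizer at $\bar t_0$, backward conjugate heat equation preserving $\int e^{-\bar f}d\bar\mu$, then monotonicity of $\bar F$) is the right one. The problem is the final ``combination step,'' which you flag only parenthetically as being ``cleanest when $\bar\lambda(\bar t_0)\ge 0$'': it is not merely cleanest there, it is only valid there. From $\rho(\bar t_1)\bar F(\bar t_1)\le\rho(\bar t_0)\bar F(\bar t_0)$ and $\rho(\bar t_1)\ge\rho(\bar t_0)>0$ you get $\bar F(\bar t_1)\le\frac{\rho(\bar t_0)}{\rho(\bar t_1)}\bar F(\bar t_0)$ with $\frac{\rho(\bar t_0)}{\rho(\bar t_1)}\in(0,1]$, and this implies $\bar F(\bar t_1)\le\bar F(\bar t_0)$ only if $\bar F(\bar t_0)\ge 0$. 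Since $\bar F(\bar t_0)=\lambda(\bar t_0)$ and testing with a constant function gives $\lambda\le k\int_M R\,d\mu/\mathrm{Vol}(M)$, the lowest eigenvalue is negative whenever the total scalar curvature is, so this is a genuine gap rather than a cosmetic one.

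Moreover the gap cannot be closed without an extra hypothesis, because the statement fails as written when $\lambda<0$: take $(M,g_0)$ compact hyperbolic, so $R_0=-n(n-1)$, and let $s$ be a negative constant with $s<-n(n-1)$. The rescaled flow stays in the homothety class $\bar g=c(t)g_0$ with $c'=2(n-1)+\frac{2s}{n}\,c$, so $c$ decreases from $1$, hence $\bar R=-n(n-1)/c$ decreases, and $\lambda=k\bar R$ (constant eigenfunction) is strictly decreasing even though $s\le 0$. The same example defeats your ``alternative route'': $\bar\lambda=\varphi^{-1}\lambda$ is a product of two nondecreasing functions, which need not be nondecreasing when $\lambda<0$. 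The honest conclusion of your argument is the proposition under the additional hypothesis $\lambda\ge0$, or under a hypothesis tying $s$ to a Rayleigh quotient so that the extra term has a sign --- which is precisely what Theorem \ref{theo rescaled} later imposes via $\lambda\le ks$. You should promote your parenthetical caveat to an explicit hypothesis rather than leave it as an aside.
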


Compare Proposition \ref{prop SRF mono 1} with Theorem \ref{Lia 01},
the main difference is Proposition \ref{prop SRF mono 1} fails to
classify the steady state of the lowest eigenvalues which is crucial
for applications of monotonicity formulas in general. Namely when
the monotonicity is not strict, it yields no information. Thus one
cannot apply this formula directly to classify steady or expanding
breathers while the functionals $\mathcal{W}_{ek}$ we previously
introduced served well for this purpose.\\

From the point of view of rescaled Ricci flow, it is natural for us
to introduce functionals $\mathcal{W}_{ek}$. Using (\ref{SRF mono
1}), we have
\begin{equation}\label{SRF mono 2}
\begin{array}{rll}
\frac{d}{d \bar t}\mathcal{\bar F}_k=\frac{2s}{n}\mathcal{\bar
F}_k-\frac{2ks^2}{n}+2(k-1)\Dint_M |\bar Rc-\frac{s}{n}\bar
g|^2e^{-\bar f}d\mu+2\Dint_M |\bar R_{ij}+\bar \nabla_i\bar \nabla_j
\bar f-\frac{s}{n}\bar g|^2e^{-\bar f}d\bar \mu.
\end{array}
\end{equation}

In the following, if we choose $s=constant$, and define
$\mathcal{\bar W}_k\eqdef\mathcal W_k(\bar g,\bar
f)=e^{-\frac{2s}{n}\bar t}(\mathcal {\bar F}_k -ks)$, then
\begin{equation}\label{SRF mono 3}
\begin{array}{rll}
\frac{d}{d \bar t}\mathcal{\bar W}_k=e^{-\frac{2s}{n}\bar
t}\bigg[2(k-1)\Dint_M |\bar Rc-\frac{s}{n}\bar g|^2e^{-\bar
f}d\mu+2\Dint_M |\bar R_{ij}+\bar \nabla_i\bar \nabla_j \bar
f-\frac{s}{n}\bar g|^2e^{-\bar f}d\bar \mu\bigg].
\end{array}
\end{equation}

We can write out $\bar t(t)$ and $\varphi(t)$ explicitly. If we
denote $\tau(t)=-\frac{2n}{s\varphi}$, then
$\tau=(-\frac{2n}{s}+t)$. Now back to the corresponding unrescaled
Ricci flow system, we have established
\begin{equation}\label{SRF mono 4}
\begin{array}{rll}
\mathcal
W_k(g,f,\tau)\eqdef\tau^2\Dint_M[k(R+\frac{n}{2\tau})+\Delta
f]e^{-f}d\mu
=\mathcal W_k(\bar g,\bar f)\\
\end{array}
\end{equation}

\begin{equation}\label{SRF mono 5}
\begin{array}{rll}
&\frac{d}{d t}\mathcal{ W}_k={\frac{d}{d\bar t}\mathcal{ W}_k}\cdot{\frac{d\bar t}{dt}}\\
=&2(k-1)\tau^2\Dint_M | {Rc}-\frac{1}{2\tau} g|^2e^{-
f}d\mu+2\tau^2\Dint_M | R_{ij}+ \nabla_i \nabla_j f-\frac{1}{2\tau}
g|^2e^{- f}d \mu\\
\end{array}
\end{equation}
This is the same formula of $\mathcal{W}_{ek}$ that we obtained in
\cite{L1}. The advantage of this formula is, clearly, we can read
that the steady states of the above functionals are Einstein
manifolds. One can further apply this property to classify expanding
breathers, see \cite{L1}. As an application to lowest eigenvalues,
we have

\begin{theo}\label{theo SRF mono 1}
On a compact Riemannian manifold $(M,g(t))$, where $g(t)$ satisfies
Ricci flow equation (\ref{Ricci flow equ}) for $t\in [0,T)$, if
$\lambda$ denotes the lowest eigenvalue of the operator $-4\Delta +
kR$, then $\tau^2(\lambda+k\frac{n}{2\tau})$ is nondecreasing under
 Ricci flow with $\frac{d \tau}{dt}=1$. The monotonicity is strict unless the metric is Einstein.
 (See Remark \ref{different k}.)
\end{theo}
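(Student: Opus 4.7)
The plan is to reduce the theorem to the monotonicity formula (\ref{SRF mono 5}) for $\mathcal{W}_k$ by Perelman's infimum-over-$f$ trick. The first step is a variational characterization of $\lambda$: with the substitution $u=e^{-f/2}$, the Rayleigh quotient of $-4\Delta+kR$ becomes
\[
\lambda(t)=\inf\Bigl\{\int_M\bigl(|\nabla f|^2+kR\bigr)e^{-f}d\mu:\int_M e^{-f}d\mu=1\Bigr\},
\]
and the integration by parts $\int\Delta f\,e^{-f}d\mu=\int|\nabla f|^2e^{-f}d\mu$ (valid under the normalization) converts this, together with the added $\frac{kn}{2\tau}$ term and the $\tau^2$ factor, into
\[
\tau^2\bigl(\lambda(t)+\tfrac{kn}{2\tau(t)}\bigr)=\inf_{\int e^{-f}d\mu=1}\mathcal{W}_k(g(t),f,\tau(t)).
\]

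Fix $t_0<t_1$ in $[0,T)$, and let $f_1$ be the smooth minimizer at time $t_1$, produced from the first (smooth, strictly positive) eigenfunction of $-4\Delta+kR$ via $u=e^{-f_1/2}$. Solve, backward in $t$ from $t_1$ down to $t_0$, the coupled $f$-equation along which (\ref{SRF mono 5}) was derived; under $u=e^{-f}$ this is a linear backward heat equation of the form $\partial_t u=-\Delta u+(R-c_\tau)u$, which is well-posed backward in time, and the constraint $\int e^{-f}d\mu=1$ is preserved by the coupled system. Integrating (\ref{SRF mono 5}) over $[t_0,t_1]$ along this evolution yields
\[
\mathcal{W}_k(g(t_0),f(t_0),\tau(t_0))\le\mathcal{W}_k(g(t_1),f_1,\tau(t_1))=\tau(t_1)^2\bigl(\lambda(t_1)+\tfrac{kn}{2\tau(t_1)}\bigr),
\]
while the variational characterization bounds the left side below by $\tau(t_0)^2(\lambda(t_0)+\tfrac{kn}{2\tau(t_0)})$. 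Chaining these inequalities gives the desired monotonicity.

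For rigidity, equality in the chain forces both squared terms on the right-hand side of (\ref{SRF mono 5}) to vanish along the coupled evolution. The vanishing of $|R_{ij}+\nabla_i\nabla_j f-\tfrac{1}{2\tau}g_{ij}|^2$ yields the gradient shrinking Ricci soliton equation; for $k>1$ the additional identity $(k-1)|Rc-\tfrac{1}{2\tau}g|^2\equiv 0$ forces $Rc=\tfrac{1}{2\tau}g$, so the metric is Einstein. The main technical obstacle I anticipate is the backward solvability and regularity of the coupled $f$-evolution starting from $f_1$; this is handled by passing to the linear formulation for $u=e^{-f/2}$ (a ground-state eigenfunction, hence smooth and strictly positive on the compact manifold) and running the resulting linear parabolic equation backward in time.
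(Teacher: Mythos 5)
Your argument is correct and is the standard Perelman-type proof: identify $\tau^2(\lambda+\frac{kn}{2\tau})$ with $\inf_f\mathcal{W}_k$ over normalized $f$, take the minimizer coming from the ground state at the later time $t_1$, solve the conjugate equation backward (well-posed because $u=e^{-f}$ satisfies $\partial_t u=-\Delta u+Ru$, which is a forward heat equation in reversed time and preserves positivity and the normalization), integrate (\ref{SRF mono 5}), and chain the two inequalities. The paper gives no explicit proof of this theorem; its template --- the proof of Theorem \ref{theo rescaled} here and the cited Theorem 5.2 of \cite{L1} --- instead differentiates $\lambda(t)$ directly, substituting the eigenfunction $e^{-f/2}$ into the first variation formula, which is why those statements carry an explicit hypothesis that $\lambda(t)$ is $C^1$ in $t$. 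Your backward-solve-and-chain version buys a real improvement: it requires no a priori regularity of $t\mapsto\lambda(t)$, only smoothness and positivity of the ground state at each fixed time; the paper's version is shorter once the $C^1$ hypothesis is granted. Two caveats. First, you label the equality-case equation $R_{ij}+\nabla_i\nabla_j f-\frac{1}{2\tau}g_{ij}=0$ a \emph{shrinking} soliton; the paper's sign conventions between (\ref{SRF mono 4}) and (\ref{SRF mono 5}) are not internally consistent, but the intended critical points (cf. Remark \ref{different k} and the fact that $\tau$ \emph{increases}) are expanding or steady gradient solitons. This matters: compact shrinkers need not be Einstein, whereas compact expanding and steady gradient solitons are. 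Second, and relatedly, your rigidity argument is complete only for $k>1$, where $(k-1)|Rc-\frac{1}{2\tau}g|^2\equiv 0$ forces Einstein directly; for $k=1$ you must still invoke the classification of compact steady/expanding gradient solitons, as Remark \ref{different k} indicates, to pass from the soliton equation to the Einstein conclusion.
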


Back to the rescaled Ricci flow, we have the twin theorem of the
above.

\begin{theo}\label{theo SRF mono 2}
On a compact Riemannian manifold $(M,g(t))$, where $g(t)$ satisfies
the rescaled Ricci flow equation (\ref{Scaled Ricci flow equ}) for
$t\in [0,T)$ with $s=constant$, if $\lambda$ denotes the lowest
eigenvalue of the operator $-4\Delta + kR$, then
$e^{-\frac{2s}{n}t}(\lambda-ks)$ is nondecreasing under the rescaled
Ricci flow. The monotonicity is strict unless the metric is
Einstein. (See Remark \ref{different k}.)
\end{theo}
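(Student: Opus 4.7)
The plan is to imitate the proof of Theorem \ref{theo SRF mono 1} but to use the rescaled monotonicity identity (\ref{SRF mono 3}) in place of (\ref{SRF mono 5}), coupled with the conjugate heat equation in (\ref{Relation thm1 equ1}) so that the established monotonicity of $\mathcal{\bar W}_k$ transfers to the lowest eigenvalue. The first step is the variational characterization: for any $\bar f$ with $\int_M e^{-\bar f}\,d\bar\mu = 1$, integration by parts gives $\mathcal{F}_k(\bar g,\bar f) = \int_M (k\bar R + |\bar\nabla \bar f|^2)\,e^{-\bar f}\,d\bar\mu$, and the substitution $u = e^{-\bar f/2}$ rewrites this as $\int_M u\,(-4\bar\Delta + k\bar R)\,u\,d\bar\mu$ with $\int_M u^2\,d\bar\mu = 1$. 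Hence
\[
\lambda(\bar t)\;=\;\inf_{\int_M e^{-\bar f}d\bar\mu = 1}\mathcal{F}_k(\bar g(\bar t),\bar f),
\]
with the infimum attained by a smooth positive minimizer $\bar f_*(\bar t)$ (standard elliptic regularity on a closed manifold).

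Next, fix $\bar t_1 < \bar t_2$ in the flow interval and set $\bar f(\bar t_2) = \bar f_*(\bar t_2)$. I would solve the conjugate equation $\partial_{\bar t}\bar f = -\bar\Delta \bar f + |\bar\nabla \bar f|^2 - \bar R + s$ backward in $\bar t$ down to $\bar t_1$. Putting $u = e^{-\bar f}$ turns this into the linear equation $\partial_{\bar t} u = -\bar\Delta u + (\bar R - s)\,u$; reversing time makes it a standard forward parabolic equation on the compact manifold, so a smooth solution exists down to $\bar t_1$ and the maximum principle keeps $u > 0$, which lets us recover $\bar f = -\log u$ smoothly. Using the rescaled volume evolution $\partial_{\bar t} d\bar\mu = -(\bar R - s)\,d\bar\mu$, a short integration by parts shows $\int_M e^{-\bar f}\,d\bar\mu \equiv 1$ along this evolution, so $\bar f(\bar t_1)$ is admissible.

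Finally, applying (\ref{SRF mono 3}) along the coupled system on $[\bar t_1,\bar t_2]$ gives
\[
e^{-\frac{2s}{n}\bar t_1}\bigl(\mathcal{F}_k(\bar g(\bar t_1),\bar f(\bar t_1)) - ks\bigr) \;\le\; e^{-\frac{2s}{n}\bar t_2}\bigl(\lambda(\bar t_2) - ks\bigr),
\]
while admissibility of $\bar f(\bar t_1)$ combined with the variational characterization yields $\mathcal{F}_k(\bar g(\bar t_1),\bar f(\bar t_1)) \ge \lambda(\bar t_1)$. Chaining these two inequalities produces the stated monotonicity $e^{-\frac{2s}{n}\bar t_1}(\lambda(\bar t_1) - ks) \le e^{-\frac{2s}{n}\bar t_2}(\lambda(\bar t_2) - ks)$. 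For rigidity, equality forces the nonnegative right-hand side of (\ref{SRF mono 3}) to vanish on $[\bar t_1,\bar t_2]$, hence $\bar R_{ij} = \frac{s}{n}\bar g_{ij}$, so the metric is Einstein. The only genuinely delicate point is the backward solvability of the $\bar f$-equation; this is the main potential obstacle, but it is handled by the linear $u$-reformulation together with the maximum principle, after which the argument is a mechanical combination of (\ref{SRF mono 3}) with the Rayleigh-quotient characterization of $\lambda$.
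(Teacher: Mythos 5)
Your argument is essentially the proof the paper intends: the theorem is stated as a direct consequence of the monotonicity formula (\ref{SRF mono 3}) combined with the eigenvalue comparison scheme of Theorem 5.2 in \cite{L1} (Rayleigh-quotient characterization of $\lambda$ via $u=e^{-\bar f/2}$, backward solution of the conjugate equation from the minimizer at the later time, preservation of the normalization $\int_M e^{-\bar f}d\bar\mu=1$, then chaining the two inequalities), and every step of your reconstruction checks out. The one caveat is the rigidity clause: when $k=1$ the coefficient $2(k-1)$ annihilates the term $\int_M|\bar{Rc}-\tfrac{s}{n}\bar g|^2e^{-\bar f}d\bar\mu$, so vanishing of the right-hand side of (\ref{SRF mono 3}) only yields the gradient soliton equation $\bar R_{ij}+\bar\nabla_i\bar\nabla_j\bar f=\tfrac{s}{n}\bar g_{ij}$, not immediately $\bar R_{ij}=\tfrac{s}{n}\bar g_{ij}$; one must then invoke the classification of compact steady/expanding gradient Ricci solitons to conclude Einstein, exactly as the paper's Remark \ref{different k} stipulates. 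For $k>1$ your conclusion is immediate as written.
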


\begin{rema}
One can find applications of the above theorems in K\"ahler Ricci
flow, in which case, one can choose $s$ to be constants.
\end{rema}

\section{\bf{Monotonicity of lowest eigenvalues under rescaled Ricci flow}}
A direct consequence of (\ref{SRF mono 2}) yields the following
monotonicity property.
\begin{theo}\label{theo rescaled}
On a compact Riemannian manifold $(M^n,g(t))$, where $g(t)$
satisfies the rescaled Ricci flow equation (\ref{Scaled Ricci flow
equ}) for $t\in [0,T)$, we denote $\lambda(t)$ to be the lowest
eigenvalue of the operator $-4\Delta + kR$ ($k\ge 1$)at time $t$.
Assume that there exists a function $\varphi(x,t)\in
C^{\infty}(M^n\times [0,T))$, such that
$s(t)=\frac{\int_M(kR+|\nabla \varphi|^2)e^{-\varphi}d\mu}{k\int
e^{-\varphi}d\mu}$, and also $\lambda(t)$ is a $C^1$ family of $t$,
then $\lambda$ is nondecreasing under the rescaled Ricci flow,
provided $s\le0$. The monotonicity is strict unless the metric is
Einstein.
\end{theo}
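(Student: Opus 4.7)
The plan is to adapt Perelman's eigenvalue‐monotonicity argument fiberwise on top of the rescaled monotonicity identity~(\ref{SRF mono 2}). First, because $\lambda(t)$ is assumed $C^{1}$ and the lowest eigenvalue of $-4\bar\Delta+k\bar R$ is simple on the compact manifold $M$, one can pick a smooth family of positive eigenfunctions $\bar f^{*}(t)$ normalized by $\int_{M}e^{-\bar f^{*}}d\bar\mu=1$, so that $\lambda(t)=\bar{\mathcal F}_{k}(\bar g(t),\bar f^{*}(t))$. Next, the hypothesis on $s$ is invariant under constant shifts in $\varphi$; normalizing so that $\int e^{-\varphi}d\bar\mu=1$ makes $ks(t)=\bar{\mathcal F}_{k}(\bar g(t),\varphi)$ one particular value of the Rayleigh quotient, hence $\lambda(t)\le ks(t)$. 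Combined with $s\le 0$ this produces the crucial sign
$$\frac{2s}{n}\bigl(\lambda-ks\bigr)\;\ge\;0.$$

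At an arbitrary time $\bar t_{0}$, let $\bar f(\bar t)$ be the local-in-time solution of the auxiliary equation $\partial_{\bar t}\bar f=-\bar\Delta\bar f+|\bar\nabla\bar f|^{2}-\bar R+s$ starting from $\bar f^{*}(\bar t_{0})$; this is precisely the $\bar f$-flow under which (\ref{SRF mono 2}) was derived, and a direct integration by parts shows it preserves $\int_{M}e^{-\bar f}d\bar\mu$. Since $\bar f^{*}(\bar t_{0})$ is a critical point of $\bar{\mathcal F}_{k}(\bar g(\bar t_{0}),\cdot)$ subject to this constraint, the first‐order $\bar f$-variation of $\bar{\mathcal F}_{k}$ vanishes along every constraint‐preserving direction; both $\partial_{\bar t}\bar f$ and $\partial_{\bar t}\bar f^{*}$ are constraint-preserving, so their difference contributes nothing at $\bar t_{0}$ and
$$\frac{d\lambda}{d\bar t}\bigg|_{\bar t_{0}}\;=\;\frac{d}{d\bar t}\,\bar{\mathcal F}_{k}(\bar g(\bar t),\bar f(\bar t))\bigg|_{\bar t_{0}}.$$
Plugging this into (\ref{SRF mono 2}) and using $\bar{\mathcal F}_{k}(\bar g(\bar t_{0}),\bar f^{*}(\bar t_{0}))=\lambda(\bar t_{0})$ gives
$$\frac{d\lambda}{d\bar t}\;=\;\frac{2s}{n}(\lambda-ks)\;+\;2(k-1)\!\int_{M}\!\Big|\bar{Rc}-\frac{s}{n}\bar g\Big|^{2}e^{-\bar f^{*}}d\bar\mu\;+\;2\!\int_{M}\!\Big|\bar R_{ij}+\bar\nabla_{i}\bar\nabla_{j}\bar f^{*}-\frac{s}{n}\bar g\Big|^{2}e^{-\bar f^{*}}d\bar\mu,$$
each summand of which is nonnegative by the previous paragraph, yielding $d\lambda/d\bar t\ge 0$.

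For strict monotonicity, $d\lambda/d\bar t=0$ at some time forces every nonnegative term above to vanish. The last integral alone yields the gradient soliton identity $\bar R_{ij}+\bar\nabla_{i}\bar\nabla_{j}\bar f^{*}=\tfrac{s}{n}\bar g$; when $k>1$ the middle integral independently forces $\bar{Rc}=\tfrac{s}{n}\bar g$, and subtracting gives $\bar\nabla_{i}\bar\nabla_{j}\bar f^{*}=0$, so $\bar f^{*}$ is constant on the compact $M$ and $\bar g$ is Einstein. When $k=1$ the middle term is absent and one invokes the standard rigidity statement that a compact gradient Ricci soliton with $s\le 0$ (steady or expanding) must have constant potential and be Einstein. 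The main obstacle in this plan is the critical-point identification in the second paragraph: one has to check carefully both that the auxiliary PDE preserves the normalization and that the critical-point property of $\bar f^{*}$ makes the $\partial_{\bar t}\bar f$-contribution drop out, so that (\ref{SRF mono 2}) genuinely computes $d\lambda/d\bar t$ rather than merely an upper bound for it.
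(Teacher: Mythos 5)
Your proposal is correct and follows essentially the same route as the paper: the paper's proof invokes the same Perelman-type eigenfunction argument (compressed into ``proceed as before'') to turn (\ref{SRF mono 2}) into the formula for $\frac{d\lambda}{d\bar t}$, and obtains the sign of the extra term from the same Rayleigh-quotient comparison $\lambda\le ks$ coming from the definitions of $\lambda$ and $s$. Your treatment of the equality case, splitting $k>1$ from $k=1$ and invoking the classification of compact steady/expanding gradient solitons in the latter, is exactly what the paper defers to Remark \ref{different k}.
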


\begin{proof}
To simplify notations, we remove all the $\bar{\quad}$ in (\ref{SRF
mono 2}). Under the hypothesis of the theorem, proceed as before, we
have
\begin{equation}\label{}
\begin{array}{rll}
\frac{d\lambda}{d t}=\frac{2s}{n}(\lambda-ks)+2(k-1)\Dint_M |
{Rc}-\frac{s}{n}g|^2e^{- f}d\mu+2\Dint_M | R_{ij}+ \nabla_i \nabla_j
 f-\frac{s}{n} g|^2e^{- f}d \mu,
\end{array}
\end{equation}
where $e^{-\frac{f}{2}}$ is the eigenfunction of $\lambda$. We know
$s\le0$, and by definitions of $\lambda$, $s$,  we have $\lambda\le
ks$ and $\frac{2s}{n}(\lambda-ks)\ge 0$. Hence,
\begin{equation}\label{}
\begin{array}{rll}
\frac{d\lambda}{d t}\ge2(k-1)\Dint_M | {Rc}-\frac{s}{n}g|^2e^{-
f}d\mu+2\Dint_M | R_{ij}+ \nabla_i \nabla_j
 f-\frac{s}{n} g|^2e^{- f}d \mu.
\end{array}
\end{equation}
This completes the proof of the theorem.
\end{proof}

\begin{rema}\label{}
(\ref{SRF mono 2}) can be obtained by direct computations instead of
using rescaling of the metrics.
\end{rema}

\begin{rema}\label{different k}
In the proof of Theorems \ref{theo SRF mono 1}, \ref{theo SRF mono
2}, and \ref{theo rescaled}, when $k=1$, one actually first prove
that it is a compact gradient expanding (or steady) Ricci soliton,
then by the known classification theorems, we know it must be
Einstein.
\end{rema}

In case of Hamilton's normalized Ricci flow, where
$s=\frac{\int_MRd\mu}{\int_Md\mu}$ is the average total scalar
curvature, and one can choose $\varphi(t)=\ln Vol(M^n)$, we have

\begin{coro}\label{coro Normalized}
On a compact Riemannian manifold $(M^n,g(t))$, where $g(t)$
satisfies the normalized Ricci flow equation of Hamilton for $t\in
[0,T)$, if $\lambda(t)$ denotes the lowest eigenvalue of the
operator $-4\Delta + kR$ ($k\ge 1$)at time $t$, assume $\lambda(t)$
is a $C^1$ family of $t$, then $\lambda$ is nondecreasing under the
normalized Ricci flow, provided the average total scalar curvature
is nonpositive. The monotonicity is strict unless the metric is
Einstein.
\end{coro}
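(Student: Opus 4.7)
The plan is to combine the Rayleigh-quotient characterization of $\lambda(t)$ with the evolution identity (\ref{SRF mono 2}) via a Perelman-style envelope argument, treating (\ref{SRF mono 2}) as a black box already established earlier in the paper.

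Concretely, at a fixed time $t_0\in[0,T)$ I would take $f_0=-2\ln u_0$, where $u_0$ is the (positive, $L^2$-normalized) eigenfunction for $\lambda(t_0)$, so that $\Dint_M e^{-f_0}d\mu=1$ and $\mathcal{F}_k(g(t_0),f_0)=\lambda(t_0)$. Then I would extend $f$ to a two-sided neighborhood of $t_0$ by propagating along the conjugate equation $\partial_t f=-\Delta f+|\nabla f|^2-R+s$ appearing in the second box of (\ref{Relation thm1 equ1}); a short computation against the evolution of $d\mu$ under the rescaled flow shows that this flow preserves $\Dint_M e^{-f}d\mu=1$. Setting $h(t):=\mathcal{F}_k(g(t),f(t))$, the Rayleigh quotient gives $\lambda(t)\le h(t)$ with equality at $t_0$, so $h-\lambda\ge 0$ attains its minimum at $t_0$; since $\lambda$ is $C^1$ by hypothesis and $h$ is $C^1$ by smoothness of the PDE, this forces $\frac{d\lambda}{dt}(t_0)=\frac{dh}{dt}(t_0)$. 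Applying (\ref{SRF mono 2}) at $t_0$ with $\mathcal{F}_k=\lambda(t_0)$ then yields
\[
\frac{d\lambda}{dt}(t_0)=\frac{2s}{n}(\lambda-ks)+2(k-1)\Dint_M\bigl|Rc-\tfrac{s}{n}g\bigr|^2 e^{-f_0}d\mu+2\Dint_M\bigl|R_{ij}+\nabla_i\nabla_j f_0-\tfrac{s}{n}g\bigr|^2 e^{-f_0}d\mu.
\]

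Next, to turn this identity into monotonicity, I would substitute $f=\varphi$ into the Rayleigh quotient and use the defining formula for $s$ to get $\lambda\le ks$; combined with $s\le 0$, the first term on the right is a product of two nonpositive quantities and hence nonnegative, while the remaining two terms are nonnegative since $k\ge 1$. Thus $\frac{d\lambda}{dt}\ge 0$, proving monotonicity. For the rigidity clause, if $\frac{d\lambda}{dt}=0$ at some $t_0$, each of the three summands must vanish: when $k>1$ the middle term immediately forces $Rc=\frac{s}{n}g$, i.e.\ the metric is Einstein; when $k=1$ only the last term is available, which gives a compact gradient steady or expanding Ricci soliton, and this is Einstein by the classification cited in Remark \ref{different k}.

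The hardest part will be justifying the envelope identity $\frac{d\lambda}{dt}(t_0)=\frac{dh}{dt}(t_0)$ rigorously. Without a priori smoothness of the eigenfunction in $t$, one typically only obtains the one-sided inequality $\frac{d\lambda}{dt}(t_0)\ge\frac{dh}{dt}(t_0)$ by propagating $f$ backward from $t_0$ and using that $\lambda$ is an infimum; this weaker inequality is already enough to conclude $\frac{d\lambda}{dt}\ge 0$, and the $C^1$ hypothesis on $\lambda(t)$ in the statement is precisely what permits the clean two-sided equality needed to read off rigidity directly from the vanishing of the squared terms.
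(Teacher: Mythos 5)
Your proof is correct and follows essentially the same route as the paper: the paper deduces this corollary as the special case of Theorem \ref{theo rescaled} with the constant test function $\varphi=\ln Vol(M^n)$ (so that $s$ becomes the average scalar curvature and the Rayleigh quotient gives $\lambda\le ks$), and the proof of that theorem is exactly the envelope argument applied to (\ref{SRF mono 2}) that you describe, including the $k=1$ soliton-classification step for rigidity from Remark \ref{different k}. Your write-up merely makes explicit the one-sided versus two-sided envelope subtlety that the paper compresses into ``proceed as before.''
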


\begin{rema}
The above monotonicity property of lowest eigenvalues under
normalized Ricci flow is dimensionless and work for all $k\ge 1$ and
the case of $k>1$ can be used to classify compact steady or
expanding Ricci breathers directly, which, in fact, yields another
proof for {\bf Corollary} 8.1 in \cite{L1}. See references of
related results in \cite{H1}, \cite{I}, \cite{P}, \cite{BM},
\cite{C0} and \cite{L1}.
\end{rema}

\begin{rema}
Similar results of Corollary \ref{coro Normalized} appeared recently
in \cite{C}.
\end{rema}

In the special case $k=1$, recall Perelman's $\bar\lambda=\lambda
V^\frac{2}{n}$ invariant \cite{P}. Since normalized Ricci flow
preserves the volume and $\bar\lambda$ is a scale invariant, the
monotonicity of lowest eigenvalues under normalized Ricci flow is
equivalent to the monotonicity of $\bar\lambda$ under Ricci flow.
Naturally, one could extend this definition and introduce a new
scale invariant constant $\bar\lambda_k\eqdef
\lambda_kV^\frac{2}{n}$ which also has monotonicity properties along
(normalized) Ricci flow and vanishes if and only if on an expanding
{\em Einstein} manifold (instead of expanding gradient solitons in
the $k=1$ case).

\begin{rema}
There is some fundamental relation between $\bar\lambda$ and the
Yamabe constant $\mathcal Y$ discussed in \cite{AIL} and the
references therein. Similar results can also be obtained for
$\bar\lambda_k$.
\end{rema}

\section{\bf Acknowledgement}
We would like to thank professor Pengfei Guan for his encouragement
and professor Xiaodong Cao for helpful discussions.

\end{document}